\title{A combinatorial divisibility question from noncommutative algebra}
\author{Arnav Tripathy}
\newcommand\nc{\newcommand}
\nc\linesep{\bigskip}
\nc\newprob[1]{\marginnote{#1}[\parskip]}
\nc\bA{\mathbb A}
\nc\bC{\mathbb C}
\nc\bD{\mathbb D}
\nc\bR{\mathbb R}
\nc\bZ{\mathbb Z}
\nc\bQ{\mathbb Q}
\nc\bP{\mathbb P}
\nc\bV{\mathbb V}
\nc\bW{\mathbb W}
\nc\bG{\mathbb G}
\nc\brac[1]{\langle#1\rangle}
\nc\abs[1]{\lvert#1\rvert}
\nc\norm[1]{\lVert#1\rVert}
\nc\onto{\twoheadrightarrow}
\nc\into{\hookrightarrow}
\nc\lto{\longrightarrow}
\nc\action{\curvearrowright}
\DeclareMathOperator
\nc\eps{\varepsilon}
\nc\tsym{\widetilde{\text{Sym}}}
\nc\oarrow[1]{\overset{#1}\to}
\nop\Hom{Hom}
\nop\End{End}
\nop\Aut{Aut}
\nop\im{Im}
\nop\id{id}
\nop\tr{Tr}
\nop\coker{coker}
\nop\Spec{Spec}
\nop\Jac{Jac}
\nop\Ext{Ext}
\nop\Tor{Tor}
\nc\op{\text{op}}
\nop\loc{Loc}
\nop\Frac{Frac}
\nc\ann{\text{ann}}
\nop\QCoh{QCoh}
\nop\Coh{Coh}
\nop\Sym{Sym}
\nop\gr{Gr}
\nop\Tot{Tot}
\nop\Fl{Fl}
\nop\tGamma{\widetilde\Gamma}
\nop\tloc{\widetilde{\text{Loc}}}
\nop\rep{Rep}
\nop\proj{Proj}
\nc\oo[1]{\overset\circ{#1}}
\nop\ospec{\oo{Spec}}
\nop\oTot{\oo{Tot}}
\nop\Bl{Bl}
\nop\Comp{Comp}
\nop\Ho{Ho}
\nop\cone{Cone}
\nop\LKE{LKE}
\nop\RKE{RKE}
\nop\pd{pd}
\nop\cd{cd}
\nop\depth{depth}
\nop\ass{Ass}
\nop\supp{supp}
\nop\codim{codim}
\nop\holim{\underset{\lto}{holim}}
\nop\dlim{\underset{\lto}{lim}}
\nop\uHom{\underline{\Hom}}
\nop\Pic{Pic}
\nop\Cl{Cl}
\nop\Div{Div}
\nop\rank{rank}
\nop\Der{Der}
\nop\dimrel{dim.rel}
\nc\sHom{\mathscr Hom}
\nc\sExt{\mathscr Ext}
\nc\dto{\dashrightarrow}
\nop\rspec{\bf Spec}
\nop\Gal{Gal}
\nop\Ind{Ind}
\nop\Frob{Frob}
\nop\Fib{Fib}
\nop\ratdim{rat\ dim}
\nop\Mod{Mod}
\nop\rat{rat}
\nop\val{val}
\nop\Rep{Rep}
\nop\colim{colim}
\nop\ind{ind}
\theoremstyle{theorem}
\newtheorem{thm}{Theorem}
\newtheorem{defn}[thm]{Definition}
\newtheorem{prop}[thm]{Proposition}
\newtheorem{conj}[thm]{Conjecture}
\theoremstyle{remark}
\newtheorem{rem}[thm]{Remark}
\begin{document} 
\maketitle
\tableofcontents

\begin{abstract}
We present a general conjecture on the divisibility of a certain expression in terms of Kostka numbers and their close variants. This conjecture is closely related to a variant of the period-index problem of noncommutative algebra, with partial implications in both directions. We present a description of the connection between these two problems via Schubert calculus as motivation and evidence for the conjecture before turning to a proof of the conjecture in a family of cases.
\end{abstract}

\section{Introduction}

Kostka numbers and their relatives are among the most important coefficients that arise in representation theory, yet despite being of obvious combinatorial importance, their divisibility properties are poorly known. Contrast this situation to, for example, that of binomial coefficients, where Lucas's and Kummer's theorem among others give us extremely precise information on their divisibility. We present in this paper a series of combinatorial divisibility conjectures on these numbers that have their birth in an intersection-theoretic approach to variants of the period-index problem in noncommutative algebra. Besides proving the conjectures in a special case, we also include an explanation of the motivation from noncommutative algebra that moreover establishes a lower bound in a larger family of cases, as reasonable evidence for the conjectures beyond the cases proven here and the numerical verification of small cases via Sage.

We now state our main conjectures. Given a partition $\lambda$, denote by $s_{\lambda}$ the Schur polynomial of type $\lambda$ and consider an inner product $\langle - , - \rangle$ given by taking the Schur polynomials as an orthonormal basis; this construction is the usual inner product on the space of class functions of the general linear group. We consider the following integer: \begin{defn} Given two positive integers $m$ and $n = km$, denote by $$g(m, n) = \langle s_{m^{n-m}} s_{(n-m)^m}, s_{m^m}^{2(k-1)} \rangle.$$ \end{defn} Note in the above that by $m^n$, for example, we mean the Young tableau consisting of a rectangle with $n$ rows and $m$ columns. Here, then, is our main conjecture: \begin{conj} Given a prime $p$ and positive integers $e < f$, and setting $m = p^e, n = p^f$, we have $$\val_p g(m, n) = f - e.$$ \end{conj}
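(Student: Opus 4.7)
The plan is to reduce the divisibility claim to a clean closed-form identity and then apply Kummer's theorem. Numerical evidence across small cases---together with the two tractable families $m = 1$ and $k = 2$ described below---strongly suggests the identity
$$g(m, n) = \binom{2(n-m)}{n-m}$$
for every $m \mid n$. Granting this, the conjecture becomes immediate: for $m = p^e$ and $n = p^f$ the base-$p$ expansion of $n - m = (p^{f-e} - 1)p^e$ has the digit $p-1$ in positions $e, e+1, \ldots, f-1$ and zero elsewhere, so adding it to itself produces exactly $f - e$ carries (the sum $2(p-1)$ propagates into each subsequent position cleanly, and the final carry dies off in position $f$). By Kummer's theorem, $\val_p \binom{2(n-m)}{n-m} = f - e$.

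Two special families are immediately tractable. For $m = 1$ Pieri's rule gives the two-term expansion $s_{1^{n-1}} \cdot s_{n-1} = s_{(n, 1^{n-2})} + s_{(n-1, 1^{n-1})}$, so pairing against $s_1^{2(n-1)} = \sum_\lambda f^\lambda s_\lambda$ collapses $g(1, n)$ to a sum of two hook-shape Specht dimensions; the hook-length identity $f^{(a, 1^b)} = \binom{a+b-1}{b}$ combined with Pascal's relation yields $g(1, n) = \binom{2n-2}{n-1}$ as predicted. For $k = 2$ (which, in the prime-power regime, forces $p = 2$), the classical multiplicity-free decomposition of $s_{m^m}^2$ as a sum of $s_\mu$ over self-complementary partitions in the $2m \times 2m$ box shows $g(m, 2m) = \langle s_{m^m}^2, s_{m^m}^2 \rangle$ equals the cardinality of that set; the top-half bijection $\mu \mapsto (\mu_i - m)_{i \leq m}$ then identifies these with partitions inside the $m \times m$ box, producing $\binom{2m}{m}$.

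For general $(m, k)$, the most natural attack uses the Hopf coproduct together with the skew-rectangle identity $s_{m^m/\mu} = s_{\mu^c}$, where $\mu^c = (m - \mu_m, \ldots, m - \mu_1)$ is the rotated complement inside the $m \times m$ box. Expanding $\Delta(s_{m^m}) = \sum_{\nu \subseteq m^m} s_\nu \otimes s_{\nu^c}$ and raising to the $2(k-1)$-th power rewrites
$$g(m, n) = \sum_{(\nu_1, \ldots, \nu_{2(k-1)})} c^{m^{n-m}}_{\nu_1, \ldots, \nu_{2(k-1)}} \cdot c^{(n-m)^m}_{\nu_1^c, \ldots, \nu_{2(k-1)}^c},$$
with each $\nu_i \subseteq m^m$ and $\sum_i |\nu_i| = m(n-m)$. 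The hard part will be producing a bijection (or a generating-function collapse) from this nested Littlewood-Richardson sum to the $\binom{2(n-m)}{n-m}$ lattice paths in an $(n-m) \times (n-m)$ grid; the interplay between the rectangular pair $(m^{n-m}, (n-m)^m)$ and the complementary pairs $(\nu_i, \nu_i^c)$ appears to be the conceptual heart of the difficulty, and this is exactly where the Schubert-calculus interpretation mentioned in the introduction might supply the missing structural input. Even absent that general proof, the two families above already furnish the divisibility claim for every $e = 0$ (using the $m = 1$ argument) and for every $p = 2$, $f = e + 1$ (using the $k = 2$ argument), delivering the "family of cases" promised in the abstract.
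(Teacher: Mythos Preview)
Your proposal rests on the closed-form identity $g(m,n)=\binom{2(n-m)}{n-m}$, and that identity is false. Using the paper's explicit expansion in Section~4 (the sum over septuples $(a,b,c,d,e,f,g)$ with the constraint $d+2f=e+2g$), one computes directly that $g(2,6)=210$, not $\binom{8}{4}=70$. In particular $g(m,n)$ does \emph{not} depend only on $n-m$: you correctly found $g(1,5)=70$, but $g(2,6)=210$ even though both have $n-m=4$. So the ``clean closed-form identity'' that the entire argument is built on collapses at the first case outside your two verified families, and the Kummer-theorem step never gets off the ground. (It is suggestive that $210=\binom{10}{4}$ and that $\val_2 210 = 1 = v_2(2)$, consistent with the paper's Theorem~10, but this is not the number your formula predicts.)

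Stripping away the failed identity, what remains of your argument is the $m=1$ family and the $k=2$ family. The $m=1$ case corresponds to $e=0$, which lies outside the hypothesis ``positive integers $e<f$'' of the conjecture, so it does not contribute to the stated claim. The $k=2$ case is exactly the paper's Theorem~9, proved there by the same lattice-path count you describe. Note also that the paper does \emph{not} claim to prove Conjecture~2 in full; it establishes only the two special families $p=2,\,f=e+1$ (Theorem~9) and $p=2,\,e=1$ (Theorem~10), the latter via a lengthy term-by-term $2$-adic analysis that has no analogue in your write-up. Your Hopf-coproduct rewriting is a reasonable starting point, but the ``bijection to $\binom{2(n-m)}{n-m}$ lattice paths'' you hope for cannot exist, since the target count is wrong.
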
 In fact, using the Cauchy identity as in~\cite{Bump}, we may rewrite $g(m, n)$ in terms of a single coefficient of a symmetric polynomial expression, now in two infinite sets of variables. Indeed, if $\Lambda$ denotes the ring of symmetric functions, we consider the basis of $\Lambda \hat{\otimes} \Lambda$ given by $s_{\lambda} s'_{\mu}$, where the primed Schur polynomials refer to the second set of variables. We then have the following application of the Cauchy identity: \begin{prop} We may rewrite $$g(m, n) = \langle \Big( \sum_{\lambda \subset (n-m)^m} s_{\lambda} s'_{\tilde{\lambda}^*} \Big)^{2(k-1)}, s_{(n-m)^m} s'_{(n-m)^m} \rangle.$$ \end{prop}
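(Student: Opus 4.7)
The plan is to combine the bialgebra adjointness in the Hopf algebra $\Lambda$ with the (bounded) dual Cauchy identity. First, applying the adjointness $\langle fg, h\rangle_\Lambda = \langle f \otimes g, \Delta h\rangle_{\Lambda \hat\otimes \Lambda}$ with $f = s_{m^{n-m}}$, $g = s_{(n-m)^m}$, and $h = s_{m^m}^{2(k-1)}$, together with the fact that $\Delta$ is a ring homomorphism, rewrites
\begin{equation*}
g(m, n) = \langle s_{m^{n-m}} \otimes s_{(n-m)^m}, (\Delta s_{m^m})^{2(k-1)}\rangle_{\Lambda \hat\otimes \Lambda}.
\end{equation*}

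Next, I would apply the isometry $\omega \otimes \id$ of $\Lambda \hat\otimes \Lambda$, where $\omega$ is the classical involution $s_\lambda \mapsto s_{\lambda'}$. Since $m^{n-m}$ and $(n-m)^m$ are conjugate partitions, this converts the first tensor factor into $s_{(n-m)^m} \otimes s_{(n-m)^m}$, matching the probe on the right-hand side of the proposition.

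The final step is to match the sum obtained after this transformation with the Cauchy-style expression in the proposition. The bounded dual Cauchy identity
\begin{equation*}
\sum_{\lambda \subset (n-m)^m} s_\lambda(x)\, s_{\lambda'}(y) = \prod_{i,j}(1 + x_iy_j)
\end{equation*}
identifies the indicated sum as an explicit element of $\Lambda \hat\otimes \Lambda$; the notation $\tilde\lambda^*$ of the proposition should be read accordingly as encoding the appropriate conjugate-and-complement operation on $\lambda$. To complete the identification, one uses the rectangular Cauchy identity $\Delta s_R = \sum_{\mu \subset R} s_\mu \otimes s_{\mu^\vee}$ for a rectangle $R$ (where $\mu^\vee$ is the rotated complement in $R$) to unpack $\Delta s_{m^m}$ explicitly and match coefficients.

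The main technical obstacle I anticipate is this final identification, since the two candidate elements of $\Lambda \hat\otimes \Lambda$ are generally distinct (one homogeneous, one not), so the equality of their pairings with $s_{(n-m)^m} \otimes s_{(n-m)^m}$ must be verified by matching Schur coefficients carefully. Small-case verification (e.g.\ $k = 3$, $m = 1$, giving $g(1,3) = 6$) provides useful sanity checks on the partition-operation conventions.
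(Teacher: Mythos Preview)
Your first two moves—Hopf adjointness and the isometry $\omega\otimes\id$—are exactly right and in fact already finish the job, once the statement is read correctly. The proposition as printed has a slip: with $\lambda^*$ taken in $(n-m)^m$, each $s_\lambda s'_{\tilde\lambda^*}$ has total degree $m(n-m)$, so the $2(k-1)$st power pairs nontrivially with $s_{(n-m)^m}s'_{(n-m)^m}$ only when $k=2$. The intended sum is over $\lambda\subset m^m$ with $\lambda^*$ the complement in the \emph{square} $m^m$; this is exactly what Section~4 expands for $m=2$. With that fix, your argument yields $g(m,n)=\langle s_{(n-m)^m}\otimes s_{(n-m)^m},\,((\omega\otimes\id)\Delta s_{m^m})^{2(k-1)}\rangle$, and since $\Delta s_{m^m}=\sum_{\mu\subset m^m}s_\mu\otimes s_{\mu^\vee}$, applying $\omega$ to the first factor and reindexing by $\lambda=\tilde\mu$ (transpose and complement commute on a square) gives $(\omega\otimes\id)\Delta s_{m^m}=\sum_{\lambda\subset m^m}s_\lambda\otimes s_{\tilde\lambda^*}$ on the nose. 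The two elements you feared were ``generally distinct'' are literally equal; the bounded dual Cauchy identity you invoke is not the relevant tool, and no delicate coefficient-matching is needed.

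This is a different and more self-contained route than the paper's. The paper does not spell out a proof of the proposition beyond a reference to Cauchy; its substantive argument is the geometric Theorem~8, which identifies $\sum_\lambda\sigma_\lambda\sigma'_{\tilde\lambda^*}$ with the Chow pullback $\phi^*\sigma_{1^{m^2}}$ under the Segre-type map of Grassmannians, via Kleiman transversality and a correspondence computation. Your Hopf-algebra argument stays entirely inside $\Lambda$; the paper's approach buys the geometric interpretation (as a top Chern class of a tensor of tautological bundles) that ties the identity to the index-reduction problem motivating it.
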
 As we discuss further below, here by $\lambda \subset (n-m)^m$ we mean to sum over partitions $\lambda$ which fit in an $m$ by $n-m$ rectangle, and then $\tilde{\lambda}^*$ denotes the transpose dual. In any case, this reformulation yields a corresponding reformulation of our main conjecture, and we conjecture a combinatorial divisibility relation that would immediately imply, in much stronger form, the correct lower bound for our main conjecture. Our conjecture here is the following: \begin{conj} For $m = p^e, n = p^f$ as before, if one expands $\sum_{\lambda \subset (n-m)^m} s_{\lambda} s'_{\tilde{\lambda}^*}$ in the elementary basis and then takes the multinomial expansion of the $2(p^{f-e}-1)$ power, then in fact every term, upon pairing with $s_{(n-m)^m} s'_{(n-m)^m}$, is divisible by $p^{f-e}$. \end{conj}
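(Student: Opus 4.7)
The plan is to analyze the $p$-adic valuation of each multinomial term by combining Kummer's theorem on the multinomial coefficient with a divisibility estimate on the Kostka numbers that arise from the Schur pairing.

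First, I would use the rectangular Cauchy identity $\sum_{\lambda\subset(n-m)^m} s_\lambda(x)s_{\tilde\lambda^*}(y) = \prod_{i,j}(x_i+y_j)$ (valid after specializing to $m$ $x$-variables and $n-m$ $y$-variables) to give a concrete presentation of $S := \sum_\lambda s_\lambda s'_{\tilde\lambda^*}$. Expanding via $\prod_i(x_i+y_j) = \sum_k e_k(x)\, y_j^{m-k}$ and then re-symmetrizing in $y$ against the elementary basis produces $S$ as an explicit $\mathbb{Z}$-linear combination $S = \sum_t c_t\, e_{\mu_t}(x) e_{\nu_t}(y)$, whose coefficients $c_t$ are governed by inverse Kostka numbers. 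A general term in the multinomial expansion of $S^N$ with $N = 2(p^{f-e}-1)$ therefore takes the form $\binom{N}{\mathbf{a}} \prod_t c_t^{a_t} \cdot e_\alpha(x) e_\beta(y)$ with $\alpha$, $\beta$ formed by concatenation of the indices; pairing against $s_{(n-m)^m}(x) s_{(n-m)^m}(y)$ yields the product $K_{m^{n-m},\alpha}\cdot K_{m^{n-m},\beta}$ of two Kostka numbers, via the standard identity $\langle e_\mu, s_\lambda\rangle = K_{\lambda',\mu}$.

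The problem thus reduces to proving $\val_p\bigl(\binom{N}{\mathbf{a}}\, K_{m^{n-m},\alpha}\, K_{m^{n-m},\beta}\bigr) \geq f-e$ for every admissible $\mathbf{a}$. By Kummer's theorem, $\val_p\binom{N}{\mathbf{a}}$ equals the number of carries in the base-$p$ addition of the $a_t$'s, and since $N = 2(p^{f-e}-1)$ has the specific base-$p$ digit pattern $(p-2, p-1, \ldots, p-1, 1)$ with $f-e+1$ nonzero digits, ``most'' configurations of $\mathbf{a}$ force at least $f-e$ carries and immediately satisfy the bound. My plan is a case analysis: treat this generic regime by Kummer alone, and in the complementary ``low-carry'' regime extract the missing $p$-powers from the Kostka numbers themselves.

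The Kostka divisibility is the crux of the argument and the principal obstacle. The strategy I would first try is to exhibit a cyclic $\mathbb{Z}/p^{f-e}\mathbb{Z}$-action (inherited from the natural block symmetry of $n = km$ with $k = p^{f-e}$) on the semistandard tableaux of shape $m^{n-m}$ with content $\alpha$, and to prove freeness of this action precisely in the low-carry regime --- the orbit decomposition then enforces the required $p^{f-e}$-divisibility. Failing a clean direct construction, one could instead aim for a modular identity of Lucas type expressing $K_{m^{n-m},\alpha} \bmod p^{f-e}$ through $p$-cores and $p$-quotients of the rectangular shape, or pull back the divisibility from the period-index motivation: since $p^{f-e}$ is precisely the exponent of the relevant Brauer class, the integrality ought to be realised geometrically, and importing this cohomological input should produce a conceptual proof matching the term-by-term divisibility predicted by the conjecture.
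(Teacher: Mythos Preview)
Your proposal is a plan rather than a proof, and you yourself flag its principal gap: the ``Kostka divisibility'' needed in the low-carry regime is never established. None of the three routes you sketch actually closes it. A free $\mb{Z}/p^{f-e}$-action on semistandard tableaux of shape $m^{n-m}$ and content $\alpha$ would force $p^{f-e}\mid K_{m^{n-m},\alpha}$ for \emph{every} admissible $\alpha$, which is already false in the smallest case $p=2$, $e=1$, $f=2$ (for instance $K_{2^2,(2,2)}=1$); so freeness must fail, and you would then need a precise accounting of fixed points and of how their contribution is compensated by the multinomial factor --- which is exactly the content of the conjecture, not a lemma towards it. The geometric route is also miscalibrated: the period-index input (Krashen's sharpness for $p=2$) gives $p^{f-e}\mid g(m,n)$, i.e.\ divisibility of the \emph{sum}, and says nothing about individual multinomial terms. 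Indeed the paper presents term-by-term divisibility as the strictly stronger statement whose proof would yield a \emph{new} argument for the index bound, so the implication runs opposite to the direction you propose to use.

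You should also be aware that the paper does not prove this conjecture in general; it remains open. What the paper does prove is the special case $p=2$, $e=1$, and the method there is far more concrete than your outline. With $m=2$ the elementary expansion of $\sum_\lambda s_\lambda s'_{\tilde\lambda^*}$ has exactly seven monomials, the relevant pairings reduce to coefficients $[\sigma_{11}^k]\sigma_1^{2k}=C_k$ (Catalan numbers) with explicit $2$-valuation $\val_2 C_k = v(k+1)-1$, and the termwise bound follows from a direct chain of digit-sum triangle inequalities $v(j+k)\le v(j)+v(k)$: one groups the eight quantities $v(a),v(b),v(d),v(e),v(f),v(g),v(a+j),v(b+j)$ into two packets that each collapse, under the constraints, to $v(2\ell+1)=v(\ell)+1$. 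There is no high-carry/low-carry case split and no abstract Kostka divisibility lemma. Your framework is a reasonable heuristic for the general conjecture, but as written it does not recover even this special case without first supplying the missing divisibility statement.
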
 


We rewrite the above to make more manifest the appearance of Kostka numbers and their variants. Recall, among the many natural bases for the symmetric function algebra, the Schur polynomials $\{s_{\lambda}\}$, the elementary polynomials $\{e_{\lambda}\}$, and the monomial basis $\{m_{\lambda}\}$. We denote the various change-of-basis matrices with the appropriate superscripts, such as $M^{se}$ the change-of-basis matrix from the Schur to the elementary basis. We denote particular matrix elements by subscripts, so that for example we write $$s_{\lambda} = \sum_{\mu} M^{se}_{\lambda \mu} e_{\mu}.$$ With this notation, the Kostka matrix $K$ with matrix elements the Kostka numbers $K_{\lambda \mu}$ is the change-of-basis matrix $M^{sm} = K$ while the other matrix coefficients we used are naturally expressible in terms of Kostka numbers via the relations $$M^{se} = J (K^T)^{-1}, M^{es} = K^T J,$$ where $J_{\lambda \mu} = \delta_{\tilde{\lambda} \mu}$ is a transposition matrix; see for example I.6 of~\cite{Macdonald} for details. We may hence reformulate the above conjecture as the following: \begin{conj} For every tuple $\{c_{\lambda \mu \nu}\}$ summing to $2(k-1)$, we have $$p^{f-e} \Big| \binom{2(k-1)}{\{c_{\lambda \mu \nu}\}} \Big( \prod_{\lambda, \mu, \nu} M^{se}_{\lambda \mu} M^{se}_{\tilde{\lambda}^* \nu} \Big) M^{es}_{\sum_{\lambda, \mu, \nu} c_{\lambda \mu \nu} \mu, (n-m)^m} M^{es}_{\sum_{\lambda, \mu, \nu} c_{\lambda \mu \nu} \nu, (n-m)^m}.$$ \end{conj}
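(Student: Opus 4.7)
The statement is presented as a direct reformulation of Conjecture 3 after substituting the change-of-basis matrices $M^{se}$ and $M^{es}$, so my plan is to verify this equivalence by unpacking both sides. Four ingredients will enter: the elementary expansion of $\sum_\lambda s_\lambda s'_{\tilde\lambda^*}$, the multinomial theorem, the multiplicativity $e_\mu e_\sigma = e_{\mu \cup \sigma}$ of the elementary basis, and the pairing identity $\langle e_\tau, s_\rho\rangle = M^{es}_{\tau\rho}$ (which is immediate from $e_\tau = \sum_\rho M^{es}_{\tau\rho} s_\rho$ and the orthonormality of the Schur basis).

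First, applying $M^{se}$ to each Schur factor in each set of variables, I would write
$$\sum_{\lambda \subset (n-m)^m} s_\lambda s'_{\tilde\lambda^*} = \sum_{\lambda, \mu, \nu} M^{se}_{\lambda \mu}\, M^{se}_{\tilde\lambda^*\, \nu}\; e_\mu\, e'_\nu.$$
Raising to the $2(k-1)$-th power and applying the multinomial theorem produces a sum indexed by tuples $\{c_{\lambda\mu\nu}\}$ of nonnegative integers summing to $2(k-1)$, each contributing
$$\binom{2(k-1)}{\{c_{\lambda\mu\nu}\}} \prod_{\lambda,\mu,\nu} \bigl(M^{se}_{\lambda\mu}\bigr)^{c_{\lambda\mu\nu}} \bigl(M^{se}_{\tilde\lambda^*\,\nu}\bigr)^{c_{\lambda\mu\nu}} \cdot \prod_{\lambda,\mu,\nu} e_\mu^{c_{\lambda\mu\nu}}\, (e'_\nu)^{c_{\lambda\mu\nu}}.$$

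Next, I would invoke multiplicativity of the elementary basis to collapse $\prod_{\lambda,\mu,\nu} e_\mu^{c_{\lambda\mu\nu}} = e_{\sum_{\lambda,\mu,\nu} c_{\lambda\mu\nu}\mu}$, where the sum denotes concatenation of partitions weighted by multiplicities, and similarly for the primed factors. Pairing the resulting term with $s_{(n-m)^m} s'_{(n-m)^m}$ and applying the identity $\langle e_\tau, s_\rho\rangle = M^{es}_{\tau\rho}$ in each set of variables yields precisely the two $M^{es}$ factors appearing on the right of the conjectural divisibility. Assembling, each multinomial summand pairs to exactly the expression appearing in the statement, so the divisibility of that expression by $p^{f-e}$ for every $\{c_{\lambda\mu\nu}\}$ is literally the content of Conjecture 3.

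The only obstacle is notational bookkeeping: one must keep the multi-index $(\lambda,\mu,\nu)$ aligned with the multiplicative structure on partitions, and interpret the product $\prod_{\lambda,\mu,\nu} M^{se}_{\lambda\mu} M^{se}_{\tilde\lambda^*\,\nu}$ in the statement as shorthand for the product with multiplicities $c_{\lambda\mu\nu}$, which is the only reading compatible with the multinomial expansion above. Beyond this, no deeper input is required: the proof is a straightforward substitution.
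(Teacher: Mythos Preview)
Your proposal is correct and matches the paper's approach exactly: the paper presents this conjecture simply as a reformulation of the preceding one (``We may hence reformulate the above conjecture as the following'') without writing out the verification, and your unpacking via the elementary expansion, multinomial theorem, multiplicativity $e_\mu e_\sigma = e_{\mu\cup\sigma}$, and the pairing $\langle e_\tau, s_\rho\rangle = M^{es}_{\tau\rho}$ is precisely the substitution the paper has in mind. Your remark that the product $\prod_{\lambda,\mu,\nu} M^{se}_{\lambda\mu} M^{se}_{\tilde\lambda^*\,\nu}$ must be read with multiplicities $c_{\lambda\mu\nu}$ is also the correct interpretation of the paper's compressed notation.
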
 

Of course, our first conjecture could also be reformulated directly in terms of these combinatorial quantities. It is these formulations that focus attention on the divisibility properties of Kostka numbers and their variants, and we would be highly interested in methods suitable for resolving these conjectures, either within or outside of pure combinatorics. In particular, the natural interpretation of Kostka numbers in terms of the representation theory of the symmetric or general linear groups suggests that one fruitful approach may be via characteristic $p$ modular representation theory. A robust theory that allowed one to easily prove divisibility of such coefficients would open the pathway to a generally applicable tool in the theory of central simple algebras, and conversely ``generic'' constructions in central simple algebras could yield a host of divisibility bounds. 

This paper arose due to the surprisingly rich combinatorial structure of an algebro-geometric approach to the period-index problem in noncommutative algebra. We address the original noncommutative algebra and algebraic geometry motivation and results in section $2$ before explaining in section $3$ their relation to the combinatorial conjectures above. Finally, in section $4$, we address the special case of $p = 2, e = 1$ in the above conjectures, where we give a lengthy but direct proof. We first learned of this problem and its associated geometry from Daniel Krashen at the 2014 Workshop in Algebraic Geometry at Seattle, to whom we owe liberal thanks; we also thank the organizers for the stimulating environment they provided at the workshop. We thank Daniel Bump, Angela Hicks, and Persi Diaconis for helpful conversations and context. Further thanks are due to Zeb Brady for frequent insights and motivation, and finally to Ravi Vakil for copious advice and comments. 

\section{Index reduction of central simple algebras}

Recall that a central simple algebra $A$ over a field $k$ is a finite-dimensional associative algebra that is simple as a $k$-algebra, with center the ground field $k$. In fact, we have $\dim_k A = n^2$ for some positive integer $n$, which we call the degree of $A$, and $A$ is said to be split if it is isomorphic to $Mat_n(k)$, the $n \times n$ matrices over $k$. A field extension $E / k$ is a splitting field for $A$ if $A \otimes_k E \simeq Mat_n(E)$, and a central problem in noncommutative algebra concerns how large splitting fields need to be given basic invariants of a central simple algebra $A$. This number is known as the index $\ind(A)$, and may also be defined by the following fact: a central simple algebra $A$ is always isomorphic to a matrix algebra over a division algebra; the degree of this associated division algebra is also $\ind(A)$.

The problem we study here is more nuanced than estimating the index of a single algebra. Instead, we take a pair of central simple algebras $A_1, A_2$ and consider a variant of the question as to whether they have a common subfield of given degree over $k$. To phrase the question more precisely, we introduce the generalized Severi-Brauer variety associated to a central simple algebra:

\begin{defn} Given a central simple algebra $A$, define the $m$th generalized Severi-Brauer variety $X_m(A)$ by the functor of points $$X_m(A)(R) = \{ \text{rank }(m \cdot \deg A)\text{ right ideals which are direct summands of }A \otimes_k R \}.$$ \end{defn}

The above functor is representable by a scheme which is a form of the Grassmannian $G(m, \deg A)$ over the non-algebraically closed field $k$. Alternatively, a central simple algebra of degree $n$ determines a Galois cohomology class $H^1(k, PGL_n)$, but as we have an obvious morphism $PGL_n \to \Aut(G(m, n))$, this class automatically defines a twisted form of the Grassmannian over $k$. For further details on these constructions and the argument that they agree, see~\cite{KrashenZero}.

These generalized Severi-Brauer varieties control the index reduction problem, a generalization of problems on the index itself. Recall the index is the degree of a field extension $E/k$ that would completely split the central simple algebra $A$, i.e. passing to $A \otimes_k E$ would reduce the index to $1$. Instead, we can ask for field extensions that merely reduce the index to some specified smaller number. From the definition, we immediately have that $X_m(A)$ has a rational point over $E$ if and only if $\ind(A \otimes_k E)$ divides $m$. Hence, one approach to the index reduction problem is to study the index of this variety, where the index of a variety is defined as the greatest common divisor of the degree of all rational zero-cycles. Analyzing $\ind X_m(A)$ is usually more algebro-geometrically tractable and provides a lower bound for the degree of any field extension that would reduce the index to $m$. As we are concerned here with a pair of algebras, our motivating question will be to estimate $\ind X_m(A_1) \times X_m(A_2)$, which now gives a bound for field extensions which simultaneously reduces the indices of both $A_1$ and $A_2$ to (a divisor of) $m$. 

We focus on the particular case where $A_1$ and $A_2$ both have degree and index equal to $n = 2^f$. We moreover assume that $A_1 \otimes A_2$ has index $2$. We seek to simultaneously reduce the indices of $A_1$ and $A_2$ to some smaller power $m = 2^e$. Then, the following result is due to~\cite{KrashenCorestriction}: \begin{thm} For central simple algebras $A_1, A_2$ as above, we always have $$\ind \Big(X_m(A_1) \times X_m(A_2)\Big) \bigm| 2^{f-e}.$$ Moreover, this bound is sharp for a suitably generic choice of $A_1$ and $A_2$. \end{thm}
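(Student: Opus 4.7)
My plan addresses the two parts of the theorem separately: divisibility of the index by $2^{f-e}$, and sharpness of this bound for generic pairs $A_1, A_2$.

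For the divisibility, the hypothesis $\ind(A_1 \otimes A_2) = 2$ provides a quadratic splitting field $L/k$ of $A_1 \otimes A_2$. Over $L$, the Brauer classes satisfy $[A_1 \otimes_k L] = -[A_2 \otimes_k L]$, and since both algebras have degree $n$, we obtain $A_1 \otimes_k L \simeq (A_2 \otimes_k L)^{\op}$. The standard duality $X_m(B^{\op}) \simeq X_{n-m}(B)$ then gives $X_m(A_2)_L \simeq X_{n-m}(A_1)_L$, so $(X_m(A_1) \times X_m(A_2))_L \simeq X_m(A_1)_L \times X_{n-m}(A_1)_L$. The two-step flag variety $\Fl_{m, n-m}(A_1 \otimes_k L)$ of nested right ideals of ranks $mn \subset (n-m)n$ surjects onto this product, so any zero-cycle on the flag variety pushes forward to one of the same degree on the product. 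Using that $A_1 \otimes_k L$ has index either $2^f$ or $2^{f-1}$, classical index bounds for flag varieties of central simple algebras together with $v_2 \binom{2^f}{2^e} = f - e$ (Kummer's theorem) give $\ind \Fl_{m, n-m}(A_1 \otimes_k L) \mid 2^{f-e-1}$. Pushforward from $L$ to $k$ along the degree-$2$ extension at most doubles the degree of a zero-cycle, yielding the bound $2^{f-e}$.

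For sharpness, I would realize a generic pair $A_1, A_2$ as the fibers of the universal pair over the function field of a moduli scheme of pairs of degree-$n$ central simple algebras whose tensor product has index $2$, built concretely as an open subscheme of the total space of the relevant versal Brauer-Severi torsors. Over an algebraic closure, $X_m(A_1) \times X_m(A_2)$ becomes $G(m, n) \times G(m, n)$, and the index of the twisted form equals the greatest common divisor of degrees of those Chow classes on this product which descend rationally to $k$. Panin-style results characterize descendable classes in terms of their Schubert expansions, so the problem reduces to a $2$-adic lower bound on certain Schubert intersection numbers. The main obstacle is precisely this sharpness direction, which amounts to the Kostka-number divisibility question that is the subject of the paper's main combinatorial conjecture; in this specific setting I would resolve it by exhibiting the extremal descendable class explicitly via the flag-variety construction of the divisibility argument and completing the lower bound by induction on $f - e$, drawing on the rectangular partition combinatorics that underlies the proof of the case $p = 2, e = 1$ in Section $4$.
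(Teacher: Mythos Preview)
The paper does not give its own proof of this theorem; it is quoted as a result of Krashen with a citation to~\cite{KrashenCorestriction}, and no argument is supplied. The surrounding discussion only records the logical relationship to the paper's combinatorial conjectures: establishing Conjecture~2 for $p=2$ would yield a new proof of the divisibility half, while Krashen's sharpness is what supplies the lower bound $\val_2 g(m,n)\ge f-e$. So there is no ``paper's proof'' to compare against here.

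On your divisibility sketch: the corestriction strategy through a quadratic splitting field $L$ of $A_1\otimes A_2$ is natural and may well be close to Krashen's own argument, but the key assertion $\ind\Fl_{m,n-m}((A_1)_L)\mid 2^{\,f-e-1}$ is not justified. You yourself allow that $(A_1)_L$ may still have index $2^f$, and in that case the standard rational zero-cycles on the twisted flag variety have $2$-valuation $f-e$, not $f-e-1$; for instance the Kummer computation you quote gives $v_2\binom{n}{m}=f-e$ and $v_2\binom{n-m}{m}=0$, so $v_2\binom{n}{m}\binom{n-m}{m}=f-e$. After corestriction from $L$ to $k$ this yields only $\ind\mid 2^{\,f-e+1}$, too weak by a factor of $2$. (A minor point: the map $\Fl_{m,n-m}\to G(m,n)\times G(n-m,n)$ is not surjective, though this is harmless since any proper morphism suffices for pushing forward zero-cycles.)

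On sharpness: your outline is too vague to evaluate and, as written, inverts the logic of the paper. The paper observes that Krashen's sharpness \emph{implies} the lower bound in Conjecture~2 (because the index divides the degree $g(m,n)$ of the rational zero-cycle constructed in Section~3), not the converse; and Section~4 handles only $e=1$, so it cannot be invoked for general $e<f$. Proving sharpness requires an independent construction of generic algebras, which you have not provided.
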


Our combinatorial reinterpretation of this question, which we begin in the next section, is directly motivated by this question with partial implications in both directions. A successful resolution of our combinatorial conjecture for the case $p = 2$ would constitute a new proof of the first statement in the above theorem, namely the universal bound $2^{f-e}$ for the index. In the opposite direction, the fact that Krashen showed the existence of suitable central simple algebras for which the index was no smaller does indeed establish one half of our combinatorial conjecture in the case $p = 2$, namely that the $2$-valuation is at least $f - e$. As such, we interpret the tie of our combinatorial conjecture to the noncommutative algebra literature as further favorable evidence. 

\section{From algebra to combinatorics}

We bound the index of $X_m(A_1) \times X_m(A_2)$ via intersection theory. Indeed, we construct a zero-cycle rational over the base field as the intersection of higher-dimensional cycles more obviously rational over the base field before computing the resulting degree, as a bound for the index, via intersection theory. In other words, if we denote by $V$ our variety $X_m(A_1) \times X_m(A_2)$, we work in the Chow ring and consider cycles in the image of $$CH^*(V) \to CH^*(V_{\overline{k}}) \simeq CH^*(G(m, n) \times G(m, n)),$$ before intersecting those cycles sufficiently many times to get a top-degree cycle, which we then evaluate under the degree map $$\deg: CH_0(V) \to CH_0(V_{\overline{k}}) \to \mb{Z}.$$ It is this degree $g(m, n)$ that will provide us our bound on the index and for which we wish to estimate the $p$-valuation. 

Note in the above that the natural map $CH^*(V) \to CH^*(V_{\overline{k}})$ is given by pullback under the base-change to the algebraic closure $V_{\overline{k}} \to V$, and that as $X_m(A_i)$ and $G(m, n)$ become isomorphic over the algebraic closure, we identify $V_{\overline{k}}$ with $G(m, n) \times G(m, n)$. Next, in order to construct Chow cycles that are rational over the original ground field, we recall the result of Artin in~\cite{ArtinBrauer} that given a central simple algebra $A$ of index $d$, the class of a codimension $d$ hyperplane section under the Plucker embedding of any generalized Severi-Brauer variety is well-defined over the base field. In the example $p = 2$ relevant for the connection to noncommutative algebra, this implies that if we consider the Segre map $$X_m(A_1) \times X_m(A_2) \to X_{m^2}(A_1 \otimes_k A_2),$$ the pullback of the appropriate power of the hyperplane class from the target is indeed rational over the base field and so suffices for the application at hand. Hence, we turn to the study of this intersection-theoretic problem. 

At this point, we may as well work purely over the algebraic closure, as the degree is certainly ambivalent as to over which field we work. Hence, we wish to calculate the pullback in Chow of the Schubert class $\phi^* \sigma_{1^{mn}}$ under the map \begin{eqnarray*} G(m, V) \times G(n, W) &\stackrel{\phi}{\to}& G(mn, V \otimes W) \\ (A \subset V, B \subset W) &\mapsto& \Big( A \otimes B \subset V \otimes W \Big). \end{eqnarray*} We will remain in characteristic zero for now so as to rely on the crutch of Kleiman's transversality theorem, but as the results are universal, they continue to hold in any characteristic. In the sequel, we will work with the intersection theory of the Grassmannian throughout, and hence use $\sigma_{\lambda}$ to denote the relevant basis of Schubert cycles, but recall that under the usual identification of the Chow ring of the Grassmannian with the (truncated) ring of symmetric functions, these Schubert cycles $\sigma_{\lambda}$ correspond precisely to the Schur polynomials $s_{\lambda}$, as explained for example in~\cite{Tamvakis}. Hence, the following theorem shows that we may indeed identify the numbers $g(m, n)$ defined in the introduction with the degree of the top power of the following pullback.

\begin{thm} (i) If $V, W$ both have dimension $m + n$ such that the map is $G(m, m+n) \times G(n, m+n) \to G(mn, (m+n)^2)$, then we have the following formula for the pullback: $$ \phi^* \sigma_{1^{mn}} = \sum_{\lambda} \sigma_{\lambda} \sigma_{\tilde{\lambda}^*}'. $$ (ii) If $V$ and $W$ have dimension at least $m + n$ or even in the limit of considering the map $G(m, \infty) \times G(n, \infty) \to G(mn, \infty),$ the same formula still holds provided one continues to only sum over such Young tableaux $\lambda$ as fit inside an $m \times n$ rectangle and the Poincar\'{e} dual cycle is still taken with respect to an $m \times n$ rectangle. \end{thm}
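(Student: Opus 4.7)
The plan is to reduce the pullback formula to a Chern class computation on the source, evaluate it via the splitting principle, and finish with a classical symmetric function identity.

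First, I would identify $\sigma_{1^{mn}}$ on $G(mn, V \otimes W)$ as the top Chern class $c_{mn}(S^*)$ of the dual tautological subbundle, via the standard correspondence between Schubert classes and Schur polynomials in the Chern roots of $S^*$ (so that the single column $1^{mn}$ corresponds to $e_{mn} = c_{mn}$). Next, by the universal property of the target Grassmannian, the map $\phi$ is classified by the rank $mn$ subbundle $S_V \otimes S_W \subset V \otimes W$ on the source, so $\phi^* S \simeq S_V \otimes S_W$ and hence
$$\phi^* \sigma_{1^{mn}} = c_{mn}\bigl(S_V^* \otimes S_W^*\bigr).$$
By the splitting principle, if $\alpha_1, \ldots, \alpha_m$ and $\beta_1, \ldots, \beta_n$ denote the Chern roots of $S_V^*$ and $S_W^*$ respectively, the right-hand side is the product $\prod_{i,j}(\alpha_i + \beta_j)$.

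The remainder reduces to the purely combinatorial symmetric function identity
$$\prod_{i=1}^m \prod_{j=1}^n (x_i + y_j) = \sum_{\lambda \subset m \times n} s_\lambda(x_1, \ldots, x_m)\, s_{\tilde\lambda^*}(y_1, \ldots, y_n).$$
I would prove this by factoring $\prod_j y_j^m$ out of the left-hand side, substituting $y_j \mapsto y_j^{-1}$, and applying the dual Cauchy identity $\prod_{i,j}(1 + x_i y_j) = \sum_\lambda s_\lambda(x) s_{\lambda'}(y)$. Combining this with the standard evaluation $s_\mu(y_1^{-1}, \ldots, y_n^{-1})\, (y_1 \cdots y_n)^m = s_{\mu^c}(y_1, \ldots, y_n)$ for $\mu \subset n \times m$, together with the fact that complementation in a rectangle commutes with transposition, recovers the claimed form; the degree constraints (at most $n$ in each $\alpha_i$ and at most $m$ in each $\beta_j$) automatically restrict the sum to $\lambda \subset m \times n$.

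For part (ii), no additional work is required: neither the identification $\phi^* S \simeq S_V \otimes S_W$ nor the splitting principle calculation is sensitive to the ambient dimensions of $V$ and $W$ beyond the condition that the tautological subbundles retain ranks $m$ and $n$, and the bidegree constraint noted above keeps the sum indexed by $\lambda \subset m \times n$ even when $V$ and $W$ are enlarged or taken to infinity. I expect the principal obstacle to be the symmetric function identity in the middle paragraph; the geometric inputs surrounding it are standard once that identity is in hand.
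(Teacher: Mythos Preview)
Your proposal is correct and takes a genuinely different route from the paper. Both arguments begin with the identification $\phi^*\sigma_{1^{mn}} = c_{mn}(S_V^* \otimes S_W^*)$, but the paper uses this only to establish part (ii): it expands the top Chern class of the tensor product as a resultant determinant and reads off from the shape of that determinant that every monomial, after Pieri, lands in the $m\times n$ (resp.\ $n\times m$) rectangle. For part (i), the paper instead argues geometrically: choosing a generic $\xi\in (V\otimes W)^*$, it shows via Kleiman transversality that $\phi^{-1}(Z_\xi)$ is the graph of the duality isomorphism $G(m,V)\to G(n,W)$ sending $A\mapsto \xi(A)^\perp$, so the correspondence it induces on Chow is $\sigma'_\mu\mapsto\sigma_{\tilde\mu}$, from which the formula follows. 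By contrast, you push the Chern class computation all the way through via the splitting principle and the classical identity $\prod_{i,j}(x_i+y_j)=\sum_{\lambda\subset m\times n} s_\lambda(x)\,s_{\tilde\lambda^*}(y)$, obtaining (i) and (ii) simultaneously and in a characteristic-free way, with no appeal to transversality. Your approach is more efficient and self-contained; the paper's geometric argument, on the other hand, explains \emph{why} the answer is the class of the graph of Grassmannian duality, which is a pleasant conceptual picture your computation does not surface. Finally, the identity you flag as the ``principal obstacle'' is in fact standard (it appears, for instance, in Macdonald's treatment of the dual Cauchy identity), so your proof is complete as written.
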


\begin{rem} Some remarks about notation are in order. Here, when discussing the Chow ring of $G(m, V) \times G(n, W)$, we denote Schubert cycles from the first factor as unprimed and the second factor as primed, so a typical basis element of Chow would be denoted $\sigma_{\lambda} \sigma_{\mu}'$. Next, given a partition or Young tableaux $\lambda$, we denote by $\tilde{\lambda}$ its transpose, following~\cite{FultonYoung}; for example, under the duality $$\iota: G(m, m+n) \, \tilde{\to} \, G(n, m+n),$$ we have $\iota^* \sigma_{\lambda} = \sigma_{\tilde{\lambda}}$. Finally, we denote the dual tableau to $\lambda$ by $\lambda^*$; concretely, if we are working in $G(m, m+n)$, we have $$\lambda^*_i = n + 1 - \lambda_{m + 1 - i}$$ so that $\sigma_{\lambda}$ and $\sigma_{\lambda^*}$ represent Poincar\'{e} dual cycles. \end{rem}

\begin{proof} We start with the second part. The content of this statement is that when we form $\phi^* \sigma_{1^{mn}}$ for the map $\phi: G(m,\infty) \times G(n,\infty) \to G(mn,\infty)$, no terms in the result vanish upon restricting to the Chow ring of $G(m,m+n) \times G(n,m+n)$. In other words, when we expand $\phi^* \sigma_{1^{mn}}$ into tensor products of Schubert cycles, all the terms are of the form $\sigma_{\lambda} \sigma'_{\mu}$ where $\lambda$ fits in an $m \times n$ rectangle while $\mu$ fits in an $n \times m$ rectangle (i.e. following the convention of~\cite{FultonYoung}, $\lambda$ should have at most $m$ rows and at most $n$ columns while the reverse holds for $\mu$). 

Now, certainly all Young tableau $\lambda$ which appear will have no more than $m$ rows as those are the classes that span the Chow ring of $G(m, \infty)$; the nontrivial assertion is that all classes which appear have at most $n$ columns, and analogously for $\mu$. Consider the tautological sub-bundle $\mc{S}_{mn}$ on $G(mn, \infty)$; by the construction of the map $\phi$, it follows that $$\phi^* \mc{S}_{mn} \simeq \mc{S}_m \otimes \mc{S}_n,$$ where the notation for the tautological sub-bundles of our two factors follows our usual convention. In fact, it will be more helpful to dualize this equation and instead write $$\phi^* \mc{S}_{mn}^* \simeq \mc{S}^*_m \otimes \mc{S}^*_n.$$ Recall that the Chern classes of the duals of these tautological sub-bundles are given by, for example, $c_i \mc{S}^*_m = \sigma_{1^i}$ for $i \le m$. We are hence interested in evaluating $$\phi^* \sigma_{1^{mn}} = \phi^* c_{mn} \mc{S}_{mn} = c_{mn} (\mc{S}^*_m \otimes \mc{S}^*_n),$$ at which point we recall that the top Chern class of a tensor product is given by a resultant as follows, possibly up to a unit: $$\phi^* \sigma_{1^{mn}} = \pm \begin{vmatrix} 1 & -\sigma_1 & \sigma_{11} & \cdots & (-1)^m \sigma_{1^m} & 0 & 0 & \cdots & 0 \\ 0 & 1 & -\sigma_1 & \cdots & (-1)^{m-1} \sigma_{1^{m-1}} & (-1)^m \sigma_{1^m} & 0 & \cdots & 0 \\ \vdots & \vdots & \vdots && \vdots & \vdots & \vdots && \vdots \\ 0 & 0 & 0 & \cdots & 1 & -\sigma_1 & \sigma_{11} & \cdots & (-1)^m \sigma_{1^m} \\ 1 & \sigma_1' & \sigma_{11}' & \cdots &  \sigma'_{1^{n-1}} & \sigma_{1^n}'  & 0 & \cdots & 0 \\ 0 & 1 & \sigma_1' & \cdots & \sigma_{1^{n-2}}' & \sigma_{1^{n-1}}' & \sigma_{1^n}' & \cdots & 0 \\ \vdots & \vdots & \vdots && \vdots & \vdots & \vdots && \vdots \\ 0 & 0 & \cdots & 1 & \sigma_1' & \sigma_{11}' & \cdots & \cdots & \sigma_{1^n}' \end{vmatrix}. $$ We immediately see from this expression that, upon expanding, each monomial has at most $n$ factors of the form $\sigma_{1^i}$ and $m$ factors of the form $\sigma'_{1^j}$ so that when we use Pieri's formula to multiply in the Schubert ring, each monomial only has partitions with at most $n$ columns for the first factor and $m$ columns for the second factor, as desired.

We now turn to the first statement of the theorem. \begin{equation*} \xymatrix{ & \phi^{-1}(Z) \ar@{^{(}->}[rr] \ar@{^{(}->}[d] & & Z \ar@{^{(}->}[d]  \\ & G(m, V) \times G(n, W) \ar@{^{(}->}[rr]^{\phi} \ar[dl]_{\pi_1} \ar[dr]^{\pi_2} & & G(mn, V \otimes W) \\ G(m, V) & & G(n, W)  } \end{equation*} We again denote by $V$ and $W$ our ambient vector spaces for our two Grassmannian factors where now $\dim V = \dim W = m + n$. We compute $\phi^* \sigma_{1^{mn}}$ by picking a representative cycle $Z$ inside $G(mn, V \otimes W)$, intersecting that cycle with the Segre-embedded $G(m, V) \times G(n, W)$ inside $G(mn, V \otimes W)$, and then viewing the resulting cycle $\phi^{-1}(Z)$ as a correspondence between $G(m, V)$ and $G(n, W)$ that induces a map on Chow groups in the usual way. Namely, given some class $\sigma \in CH^*(G(n, W))$, its image under the map induced by $\phi^{-1}(Z)$ is $$\sigma \mapsto (\pi_1)_* \Big( \phi^{-1}(Z) \cdot \pi_2^* \sigma \Big).$$ Computing this map will straightforwardly lead to an expression for $\phi^* \sigma_{1^{mn}}$, i.e. the class of $\phi^{-1}(Z)$ in Chow.

We must be careful that our choice of representative cycle inside $G(mn, V \otimes W)$ be transverse to the embedded $G(m, V) \times G(n, W)$ and that the subsequent intersection be transverse to the preimages of the Schubert cycle representatives we will choose inside the second factor. Well, a typical representative of $\sigma_{1^{mn}}$ inside $G(mn, V \otimes W)$ would be to choose an element $\xi \in (V \otimes W)^*$ and consider the locus of $mn$-dimensional subspaces annihilated by $\xi$. Using Kleiman's transverality theorem, this representative $Z_{\xi}$ will indeed be transverse to any other smooth subscheme provided we take $\xi$ generic, which we shall now do and thereby satisfy all our transversality requirements. In particular, we suppose that $\xi$ has full rank, i.e. if we consider it as a morphism $\xi: W \to V^*$, it is an isomorphism. 

Under the projections $\pi_1, \pi_2$ from $G(m, V) \times G(n, W)$ to its component factors, we claim that the intersection $\phi^{-1}(Z_{\xi})$ projects isomorphically to either factor and is thereby the graph of an isomorphism between the two varieties. We will argue this claim set-theoretically, leaving to the reader the notational upgrade required to perform the argument on the functor-of-points level. Indeed, given $A \subset V$ of dimension $m$, any preimage under $\pi_1: \phi^{-1}(Z_{\xi}) \to G(m, V)$ is a point $B \subset W$ satisfying $\xi(A \otimes B) = 0 \Leftrightarrow B \subset (\xi(A))^{\perp}$, which is already of dimension $n$ so that there exists a unique such subspace $B$. We may similarly show $\pi_2: \phi^{-1}(Z_{\xi}) \to G(n, W)$ is an isomorphism. Hence, $\phi^{-1}(Z_{\xi})$ is the graph of the isomorphism \begin{eqnarray*} G(m, V) &\stackrel{\iota}{\to}& G(n, W) \\ A &\mapsto& (\xi(A))^{\perp}. \end{eqnarray*} It hence follows that the ensuing map on Chow groups induced by this correspondence is simply given by $\sigma'_{\mu} \mapsto \sigma_{\tilde{\mu}}$. It is now immediate to verify that the formula we provided performs exactly this morphism when interpreted as a correspondence between the two varieties in question. \end{proof}

Return to the special case of $G(m, m + n ) \times G(n, m + n)$ and consider the coefficient of the point class in $(\phi^* \sigma_{1^{mn}})^2$. As each term in the expansion above now has a unique partner term (possibly itself) to which it is Poincar\'{e} dual, this coefficient is simply the total number of terms, or the total number of Young tableaux that fit inside an $m \times n$ rectangle. As we may put this set in bijection with the set of paths from the lower-left corner of the rectangle to the upper-right corner using only up and right moves (the Young tableaux then being the portion of the rectangle to the upper-left), this coefficient is immediately seen to be $\binom{m+n}{m, n}$. As such, we have the following:

\begin{thm} Conjecture 2 is true for the special case $p = 2$ and $f = e + 1$. \end{thm}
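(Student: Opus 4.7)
The plan is to exploit the accidental squareness of the relevant rectangle when $k = 2$ (so $n - m = m$) to reduce $g(m, 2m)$ to a central binomial coefficient, after which the theorem becomes a one-line application of Kummer's theorem. By the Proposition,
\[
g(m, 2m) = \Bigl\langle \Bigl(\sum_{\lambda \subset m\times m} s_\lambda s'_{\tilde\lambda^*}\Bigr)^{2}, s_{m^m} s'_{m^m}\Bigr\rangle,
\]
and via Theorem 3 this is exactly the coefficient of the point class $\sigma_{m^m}\sigma'_{m^m}$ in $(\phi^* \sigma_{1^{m^2}})^2$ for the Segre-type map $\phi : G(m, 2m) \times G(m, 2m) \to G(m^2, (2m)^2)$.

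Next, I would expand this square and extract the point-class coefficient via Poincar\'e duality. Each cross-term contributes $\langle \sigma_\lambda \sigma_\mu, \sigma_{m^m}\rangle \cdot \langle \sigma'_{\tilde\lambda^*} \sigma'_{\tilde\mu^*}, \sigma'_{m^m}\rangle$, which equals $[\mu = \lambda^*]\cdot[\tilde\mu^* = (\tilde\lambda^*)^*]$ by the perfect pairing of Schubert classes in $G(m, 2m)$. A brief combinatorial check shows that transposition and rectangle-duality commute on the $m \times m$ square, i.e. $(\tilde\lambda)^* = \widetilde{\lambda^*}$, so once the first condition $\mu = \lambda^*$ holds, the second reduces to the identity $\tilde\lambda = \tilde\lambda$ and is automatic. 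Hence every $\lambda \subset m \times m$ contributes exactly one surviving term, giving
\[
g(m, 2m) = \#\{\lambda \subset m \times m\} = \binom{2m}{m}
\]
by the standard lattice-path bijection enumerating Young diagrams inside a square.

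For the final step I would invoke Kummer's theorem: $\val_2 \binom{2m}{m}$ equals the number of carries when adding $m + m$ in base $2$. Since $m = 2^e$ has a single $1$ in its binary expansion, the sum $m + m = 2^{e+1}$ produces exactly one carry, so $\val_2 g(m, 2m) = 1 = f - e$, which is precisely the statement of Conjecture 2 in this case.

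I do not anticipate any genuine obstruction here. The real combinatorial collapse happens because the rectangle is square: Poincar\'e self-pairing does all the work and the entire statement reduces to a classical central binomial coefficient whose two-adic valuation is immediately accessible via Kummer. The interest of the case is thus less in its difficulty than in the clean way the geometric reformulation of Theorem 3, specialized to $k = 2$, leaves only a single elementary divisibility step to finish; for larger $k$ the rectangle ceases to be square, each $\lambda$ acquires a distinct dual partner, and the point-class coefficient is no longer a binomial, which is where the genuine content of the full conjecture begins.
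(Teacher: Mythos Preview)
Your proof is correct and follows essentially the same approach as the paper's. Both arguments reduce $g(2^e,2^{e+1})$ to the central binomial coefficient $\binom{2^{e+1}}{2^e}$ by observing that in the square of $\sum_\lambda \sigma_\lambda \sigma'_{\tilde\lambda^*}$ every $\lambda$ has a unique Poincar\'e-dual partner, so the point-class coefficient is the number $\binom{2m}{m}$ of diagrams in an $m\times m$ box; both then finish with Kummer's theorem. Your write-up supplies the explicit verification that the two duality conditions collapse to one (via $(\tilde\lambda)^* = \widetilde{\lambda^*}$), which the paper leaves implicit, but the substance is identical.

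One small remark on your closing paragraph: the pairing argument the paper gives actually works for arbitrary rectangles, not just square ones, so ``squareness'' per se is not what makes this case collapse. What is genuinely special about $f = e+1$ is that $k = 2$, so the exponent $2(k-1)$ equals $2$ and Poincar\'e duality completely determines the point-class coefficient; for $k>2$ one must control a higher power, and that is where the difficulty lies. Also, your reference to ``Theorem 3'' should point to the pullback theorem in Section~3 (numbered Theorem~8 in the paper's shared counter), not to Proposition~3.
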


\begin{proof} The above argument explicitly calculates $$g(2^e, 2^{e+1}) = \binom{2^{e+1}}{2^e},$$ which is only singly divisible by $2$ by Kummer's theorem, and so $$\val_p g(m, n) = 1 = f - e.$$ \end{proof}

In the next section, we treat the opposite special case, where $e$ is fixed at $1$ but $f$ may be arbitrarily far apart from $e$.

\section{The case $p^e = 2$}

In this section, we shall show the following.

\begin{thm} Conjectures 2 and 4 are true for the special case $p = 2$ and $e = 1$. \end{thm}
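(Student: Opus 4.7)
The lower bound $\val_2 g(2, 2^f) \ge f - 1$ is already available from Krashen's construction of generic central simple algebras realizing the sharp index in Theorem 2, since the index of a variety divides the degree of any rational zero-cycle produced by our intersection-theoretic recipe and $g(2, 2^f)$ is the degree of precisely such a cycle. The task is therefore to prove the matching upper bound together with the stronger term-by-term divisibility asserted by Conjecture 4.

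The plan is to work throughout in the Cauchy reformulation of Proposition 2,
$$g(2, n) = \langle A^{2(k-1)}, s_{(n-2)^2} s'_{(n-2)^2} \rangle, \qquad A := \sum_{\lambda \subset (n-2)^2} s_{\lambda} s'_{\tilde{\lambda}^*}.$$
The case $m = 2$ is particularly tractable because the partitions $\lambda$ indexing the sum are simply pairs $(a, b)$ with $n - 2 \ge a \ge b \ge 0$, and Schubert calculus on $G(2, \bullet)$ reduces to a two-step Pieri rule. Expanding $A^{2(k-1)}$ multinomially recasts the computation as a finite sum over multi-indices $\{c_{\lambda\mu\nu}\}$ summing to $2^f - 2$, each contribution being a product of a multinomial coefficient and Kostka-type entries of $M^{se}$ and $M^{es}$.

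To establish Conjecture 4, the plan is a case analysis on the shape of the multi-index. When $\{c_{\lambda\mu\nu}\}$ is spread across many indices, Kummer's theorem applied to $\binom{2^f - 2}{\{c_{\lambda\mu\nu}\}}$ in base $2$ forces at least $f - 1$ carries and so supplies the divisibility directly. When the support of the multi-index is concentrated on a few terms, the multinomial coefficient may only be $2$-adically small and the requisite divisibility by $2^{f-1}$ must instead come from the Kostka-type factors; for $m = 2$ these admit a short elementary-basis expansion via the two-row Jacobi--Trudi identity $s_{(a,b)} = h_a h_b - h_{a+1} h_{b-1}$, which makes the $2$-adic valuation of each entry amenable to direct combinatorial computation.

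Conjecture 2 then requires showing in addition that the surviving minimal-valuation contributions, after division by $2^{f-1}$, sum to an odd integer. The main obstacle is the bookkeeping across the many surviving contributions to rule out accidental cancellation mod $2^f$; one expects to handle this by pairing contributions under a natural symmetry of the pairs $(a, b)$ fitting inside the shape $(n-2)^2$, so that all but a small controlled set of fixed points pair up into multiples of $2^f$, leaving a residue that can be evaluated directly. The sheer number of multi-indices to track at this final stage is what renders the proof, in the words of the introduction, "lengthy but direct".
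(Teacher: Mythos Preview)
Your proposal is a high-level plan rather than a proof, and even as a plan it has a genuine gap at the heart of the term-by-term divisibility. Your proposed dichotomy ``spread versus concentrated'' for the multi-index does not do the work you claim. For $n = 2^f - 2$ one has $v(n) = f-1$, so by Kummer $\val_2\binom{n}{\{c_i\}} = \sum v(c_i) - (f-1)$; thus the multinomial alone contributes $\ge f-1$ only when $\sum v(c_i) \ge 2(f-1)$, and ``spread across many indices'' does not force this (e.g.\ if each $c_i$ is a power of $2$ the sum counts only the number of parts). In truth the multinomial almost never carries the full divisibility by itself, and the missing piece is that the $M^{es}$ factors in this case are \emph{Catalan numbers}: after expanding in the elementary basis and extracting the point class in $G(2,\bullet)$ one needs precisely $[\sigma_{11}^k]\sigma_1^{2k} = C_k$, with the crucial formula $\val_2 C_k = v(k+1) - 1$. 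The paper then bounds every term uniformly---no case split---by grouping the $v$-contributions from the multinomial and from the two Catalan factors and applying the subadditivity $v(i+j) \le v(i)+v(j)$ in a carefully chosen way so that everything collapses to $2v(2\ell+1) = 2v(\ell)+2$. Your Jacobi--Trudi sketch does not surface this mechanism, and without it I do not see how to close the ``concentrated'' case.

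Your instinct for the exactness half is closer to the mark: the paper does use an involution, specifically $(a,d,f)\leftrightarrow(b,e,g)$ on the $7$-tuples parametrizing terms, reducing to fixed points; on fixed points the term factors as $\binom{2\ell}{\ell}\bigl(\binom{\ell}{a,d,f}\bigr)^2 C_{a+d}^2$, and one then counts, via the parity of Catalan numbers and disjoint-binary-support constraints, that exactly one fixed point achieves valuation $v(\ell)$. Finally, note that appealing to Krashen for the lower bound is permissible but redundant once you prove Conjecture~4: the term-by-term divisibility already yields $\val_2 g \ge f-1$ combinatorially, and the paper's Section~4 argument is in fact self-contained in this way.
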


In fact, we establish the more general claim that $$\val_2 g(2, 2\ell+2) = v_2(\ell),$$ where $v_2(\ell)$ denotes the number of $1$s in the binary expansion of $\ell$. Similarly, we will more generally show that if we expand via the multinomial formula in the elementary basis, every term that contributes to $g(2, 2\ell + 2)$ has $2$-valuation at least $v_2(\ell)$, so that we will establish analogues of both Conjectures 2 and 4 in this slightly greater generality.

To make progress on the evaluation of $g(2,2\ell+2)$, we begin by rewriting \begin{eqnarray*} \phi^* \sigma_{1111}^\ell &=& (\sigma_1^2 \sigma_{11}' + \sigma_{11} \sigma_1'^2 - 2 \sigma_{11} \sigma_{11}' + \sigma_1 \sigma_{11} \sigma_1' + \sigma_1 \sigma_1' \sigma_{11}' + \sigma_{11}^2 + \sigma_{11}'^2)^{2\ell} \\ &=& \sum_{\substack{a + b + c + d + \\e + f + g = 2\ell}} (-2)^c \binom{2\ell}{a,b,c,d,e,f,g} \sigma_1^{2a + d + e} \sigma_1'^{2b + d + e} \sigma_{11}^{b + c + d + 2f} \sigma_{11}'^{a + c + e + 2g}. \end{eqnarray*} Note that the above is precisely the expansion of the original expression in the Schur classes in terms of the elementary basis. Recall that we are seeking the coefficient of the point class $\sigma_{2\ell,2\ell} \sigma_{2\ell,2\ell}'$, which we will denote using the notation for extracting coefficients of certain terms in generating functions, i.e. by writing for example $[\sigma_{2\ell,2\ell} \sigma_{2\ell,2\ell}']$ before an expression. Recall furthermore that multiplying by $\sigma_{11}$ in $CH^*(G(2,\infty))$ simply raises both indices by $1$, so that we may now write \begin{eqnarray*} [\sigma_{11}^{2\ell} \sigma_{11}'^{2\ell}] \phi^* \sigma_{1111} &=& \sum_{\substack{a+b+c+d+\\e+f+g=2\ell}} (-2)^c \binom{2\ell}{a,b,c,d,e,f,g} [\sigma_{11}^{2\ell-b-c-d-2f}\sigma_{11}'^{2\ell-a-c-e-2g}] \sigma_1^{2a+d+e} \sigma_1'^{2b+d+e} \\ &=& \sum_{\substack{a+b+c+d+\\e+f+g=2\ell}} \Big( (-2)^c \binom{2\ell}{a,b,c,d,e,f,g} \delta_{2a+d+e,2(2\ell-b-c-d-2f)} \delta_{2b + d + e,2(2\ell-a-c-e-2g)} \Big) \\ && \qquad \qquad \qquad \qquad \times \Big( [\sigma_{11}^{a + (d + e)/2}] \sigma_1^{2a + d + e} \Big) \times \Big( [\sigma_{11}'^{b + (d + e)/2}] \sigma_1'^{2b + d + e} \Big) \\ &=& \sum_{\substack{a+b+c+d+\\e+f+g=2\ell}} (-2)^c \binom{2\ell}{a,b,c,d,e,f,g} \delta_{d + 2f, e + 2g} C_{a + (d + e)/2} C_{b + (d + e)/2}, \end{eqnarray*} where we made several simplifications throughout the above. First, we noted that the only way the power $\sigma_1^{2a + d + e}$ could provide a $\sigma_{11}^{2\ell-b-c-d-2f}$ term in the first Chow ring factor would be if the degrees matched, i.e. if $$2a + d + e = 2(2\ell - b - c - d - 2f),$$ but $$2\ell - b - c - d - 2f = a + e + g - f$$ so that condition turns into \begin{eqnarray*} 2a + d + e &=& 2(a + e + g - f) \\ \Leftrightarrow d + 2f &=& e + 2g. \end{eqnarray*} Similarly, the other Kronecker delta collapses to the same condition (as it must, as one may equivalently phrase the condition as simply that the two Chow ring factors contribute equal degree to the term in question). Note that this condition also implies $d$ and $e$ have the same parity so $(d + e)/2$ is an integer and the expressions on the subsequent lines make sense. Finally, it remains to recall that in $CH^*(G(2,k+2))$, the coefficient $[\sigma_{11}^k] \sigma_1^{2k}$ (of classical interest as the degree of the Pl\"{u}cker embedding) is given by the Catalan number $$C_k = \frac{1}{k+1} \binom{2k}{k}$$. We hence now have the task of evaluating the $2$-valuation of the sum $$\sum_{\substack{a + b + c + d + e + f + g = 2m \\ d + 2f = e + 2g}} (-2)^c \binom{2m}{a,b,c,d,e,f,g} C_{a + (d + e)/2} C_{b + (d + e)/2}.$$ 

As claimed above, we will in fact show that every single term in this sum is actually divisible by $2^{v(\ell)}$ in accord with the second of our main conjectures. We hence have two tasks ahead of us now: first, to establish this claim that every term in the sum above has $2$-valuation at least $v(\ell)$ and second, to show that we have an odd number of terms with $2$-valuation exactly $v(\ell)$. To evaluate the $2$-valuations of the terms appearing, recall that from Kummer's theorem, we have $$\val_2 \binom{n}{k} = v(k) + v(n-k) - v(n)$$ and that iterating this construction gives, for example, that $$\val_2 \binom{2\ell}{a,b,c,d,e,f,g} = v(a) + v(b) + v(c) + v(d) + v(e) + v(f) + v(g) - v(2\ell).$$ We also have \begin{eqnarray*}\val_2 C_k &=& \val_2 \binom{2k}{k} - \val_2 (k+1) \\ &=& v(k) + v(k) - v(2k) - \val_2 (k+1) \\ &=& v(k) - \val_2(k+1) = v(k+1) - 1,\end{eqnarray*} where the last equality follows by pondering the binary expansion for $k$: a priori, $\val_2(k+1)$ is the number of $1$s in the terminal string of consecutive $1$s in the binary expansion of $k$ and so $v(k) - \val_2(k+1)$ is the number of nonterminal $1$s in the binary expansion of $k$; we leave it to the reader to convince herself by inspection that the expression $v(k+1) - 1$ computes the same thing. We hence have expressions for the $2$-valuations of all factors appearing in the product whose $2$-valuation we need to bound. We now make heavy use of a triangle inequality for $v(k)$, namely thatfor any nonnegative integers, we have $$v(k + j) \le v(k) + v(j).$$ Indeed, this triangle inequality immediately follows from Kummer's theorem for the $2$-valuation for $\binom{k + j}{k}$. The observation $v(2k) = v(k)$ shall also be important. Note for example that this inequality implies that \begin{eqnarray*} \val_2 C_{k + c} &=& v(k+c+1) - 1 \\ &\le& v(k+1) + v(c) - 1 \\ &=& \val_2 C_k + v(c). \end{eqnarray*} We now claim that replacing the pair $(a,c)$ by the pair $(a + c,0)$ can only reduce the $2$-valuation of the term we are estimating, i.e. that \begin{eqnarray*} \val_2 \Big( 2^c \binom{2\ell}{a,b,c,d,e,f,g} C_{a + (d + e)/2} C_{b + (d + e)/2} \Big) \ge \\ \, \val_2 \binom{2\ell}{a+c,b,d,e,f,g} C_{a + c + (d + e)/2} C_{b + (d + e)/2}.\end{eqnarray*} Indeed, the multinomial coefficient on the left side is simply a multiple of the multinomial coefficient on the right by $\binom{a + c}{a}$ and so certainly has (nonstrictly) larger $2$-valuation. On the other hand, $$\val_2 C_{a + c + (d + e)/2} \le \val_2 C_{a + (d + e)/2} + v(c) \le \val_2 C_{a + (d + e)/2} + c.$$ Hence, the claim is demonstrated. As our new septuple still satisfies our pair of conditions, we may therefore assume $c = 0$. Note also that in the last step of the above, the only way we can have equality is if $v(c) = c$, i.e. if $c = 0$ or $1$. 

It remains to show that for a sextuple $(a,b,d,e,f,g)$ satisfying $$a + b + d + e + f + g = 2\ell, d + 2f = e + 2g,$$ where we now denote $j = (d + e)/2 + 1$ for simplicity, that we have the inequality $$v(\ell) \le v(a) + v(b) + v(d) + v(e) + v(f) + v(g) - v(2\ell) + v(a + j) - 1 + v(b + j) - 1.$$ Rearranging, we wish to show $$2v(\ell) + 2 \le v(a) + v(b) + v(d) + v(e) + v(f) + v(g) + v(a + j) + v(b + j).$$ To show this inequality, we use our triangle inequality for the $v(k)$ function in the following way: \begin{eqnarray*} v(a) + v(b) + v(d) + v(e) + v(f) + v(g) + v(a+j) + v(b+j) &=& \\ \Big( v(a) + v(b+j) + v(d) + v(2f) \Big) + \Big( v(b) + v(a + j) + v(e) + v(2g) \Big) &\ge& \\ v(a+b+j+d+2f) + v(b+a+j+e+2g). && \end{eqnarray*} Using our constraint that $d + 2f = e + 2g$, we see that the two given expressions are in fact equal; moreover, their sum is \begin{eqnarray*} 2a + 2b + 2j + d + e + 2f + 2g &=& 2a + 2b + 2d + 2e + 2 + 2f + 2g \\ &=& 4\ell + 2 \end{eqnarray*} so that in fact each of the two given expressions is equal to $2\ell + 1$ and we have \begin{eqnarray*} v(a) + v(b) + v(d) + v(e) + v(f) + v(g) + v(a + j) + v(b + j) &\ge& 2v(2\ell+1) \\ &=& 2 \Big( v(\ell) + 1 \Big),\end{eqnarray*} which is precisely what we wanted to show.

We now show that we have an odd number of terms in the sum with $2$-valuation exactly equal to $v(\ell)$. Note that $$(a,d,f) \leftrightarrow (b,e,g)$$ is an involution we can perform on the septuples satisfying our two constraints which does not change the $2$-valuation of the term. As such, any septuples which aren't fixed under this involution may be paired up with their image; since we only care about checking we have an odd number of septuples with $2$-valuation exactly equal to $v(\ell)$, we may further suppose they are fixed points of this involution, i.e. that they additionally satisfy $$a = b, d = e, f = g.$$ Recall now that we have previously shown that unless $c$ is $0$ or $1$, the ensuing term will have $2$-valuation strictly larger than the $2$-valuation of some other term (where $c$ is replaced by zero) and so in particular cannot have the minimal possible $2$-valuation of $v(\ell)$. So we know that we must have $c = 0$ or $1$, but $a + b + c + d + e + f + g = 2\ell$ and $a = b, d = e, f = g$ implies $c$ is even, so we must in fact have $c = 0$. We may now rewrite our term as $$(-2)^c \binom{2\ell}{a,b,c,d,e,f,g} C_{a + (d+e)/2} C_{b + (d+e)/2} = \binom{2\ell}{\ell} \Big( \binom{\ell}{a,d,f} \Big)^2 C_{a+d}^2,$$ but recall that $$\val_2 \binom{2\ell}{\ell} = v(\ell)$$ so the other factors must be odd. By Kummer's theorem, this implies that $a, d, f$ have disjoint supports in their binary representations, i.e. that no two can have a $1$ in the same spot. As they sum to $\ell$, they form a partition of the $1$s in the binary representation of $\ell$. Now, in order for the Catalan number to be odd, we must have that $a + d$ is of the form $2^r - 1$. For any $r$ that works (i.e. such that the binary representation of $\ell$ ends with at least $r$ ones), $f$ is fixed as $\ell - 2^r + 1$ while $a$ and $d$ are allowed to partition the $1$s in the binary representation of $2^r - 1$ however they feel like it; in other words, for any subset of $\{0, 1, \cdots, r - 1\}$, we have a solution with $a$ given by the binary representation with $1$s exactly at members of that subset. As such, we have $2^r$ solutions, which is an even number except for the one case where $r = 0$, which corresponds to $f = \ell, a = d = 0$. In total, then, we have now shown that we have an odd number of terms with $2$-valuation equal to $v(\ell)$, which concludes the proof.

\bibliographystyle{alpha}
\bibliography{ref}
\end{document}